\title{On the nontrivial extremal eigenvalues of graphs}
\author[1]{Wenbo Li}\author[2]{Shiping Liu}
\affil[1,2]{School of Mathematical Sciences, University of Science and Technology of China, Hefei 230026, China}
\affil[1]{patlee@mail.ustc.edu.cn}
\affil[2]{spliu@ustc.edu.cn}
\date{}
\begin{document}
\maketitle

\newtheorem{theorem}{Theorem}
\newtheorem{proposition}{Proposition}
\newtheorem{corollary}{Corollary}
\newtheorem{lemma}{Lemma}
\newtheorem{definition}{Definition}
\newtheorem{remark}{Remark}
\newtheorem{eg}{Example}

\begin{abstract}
We present a finer quantitative version of an observation due to Breuillard, Green, Guralnick and Tao
which tells that for finite non-bipartite Cayley graphs, once the nontrivial
eigenvalues of their normalized adjacency matrices are uniformly bounded away from $1$, then they are also uniformly bounded away from $-1$. Unlike previous works which depend heavily on combinatorial arguments, we rely more on analysis of eigenfunctions. We establish a new explicit lower bound for the gap between $-1$ and the smallest normalized adjacency eigenvalue, which improves previous lower bounds in terms of edge-expansion, and is comparable to the best known lower bound in terms of vertex-expansion.
\end{abstract}

\section{Introduction}

One of the main topics of spectral graph theory is to explore the relationship between structural properties of a graph and eigenvalues of associated matrices. Let $G=(V,E)$ be a finite graph with $n$ vertices. Denote by $$\mu_{n} \leq \mu_{n-1} \leq ... \leq \mu_{2} \leq \mu_{1}$$ the eigenvalues of its normalized adjacency matrix. We call an eigenvalue trivial if it equals $1$ or $-1$. Recall that $\mu_1$ is always equal to $1$ and $\mu_n=-1$ if and only if the graph has a bipartite connected component. The well-known Cheeger inequality \cite{alon1985lambda1,alon1986eigenvalues,dodziuk1984difference} $h^2/2\leq 1-\mu_2\leq 2h$ relates the spectral gap $1-\mu_2$ and the edge-expansion (also called Cheeger constant) $h$. In this article we show that the following inequality involving $\mu_2$, $\mu_{n-1}$ and $h$ holds.
\begin{theorem}\label{25}
    Let $G=(V,E)$ be a finite connected graph. Then we have
    \begin{equation}\label{11}
        1+\mu_{n-1}\geq \frac{(1-\mu_2)^2}{2h^2}
\left(\sqrt{1+\frac{h^2}{1-\mu_2}}-1\right)^2.
    \end{equation}
\end{theorem}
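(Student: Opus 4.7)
The plan is to take an eigenfunction $f$ of $D^{-1/2}AD^{-1/2}$ corresponding to $\mu_{n-1}$, normalized so that $\sum_x d_x f(x)^2 = 1$ (hence $f \perp_D \mathbf 1$, since $\mu_{n-1}\neq 1$), and use $g := |f|$ as an auxiliary test function. Writing $Q = I + D^{-1/2}AD^{-1/2}$ and $\mathcal L = I - D^{-1/2}AD^{-1/2}$ one has $R_Q(f) = 1+\mu_{n-1}$. The elementary pointwise inequality $(|f(x)|-|f(y)|)^2 \leq (f(x)+f(y))^2$, valid on every edge with equality precisely on opposite-sign edges, sums to $R_{\mathcal L}(g) \leq R_Q(f) = 1+\mu_{n-1}$. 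Setting $\bar g = \sum_x d_x g(x)/\mathrm{vol}(V)$, the function $g - \bar g\,\mathbf 1$ is $D$-orthogonal to $\mathbf 1$, so the Poincar\'e inequality for the spectral gap $1-\mu_2$ yields $R_{\mathcal L}(g) \geq (1-\mu_2)(1 - \bar g^2\,\mathrm{vol}(V))$, and combining with the previous bound I get the first key estimate
\begin{equation*}
1 - \bar g^2\,\mathrm{vol}(V) \;\leq\; \frac{1+\mu_{n-1}}{1-\mu_2}. \tag{$\star$}
\end{equation*}

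Next I would apply the Cheeger isoperimetric inequality through the co-area formula to the nonnegative function $f^2 = g^2$: for a weighted median $m$ of $f^2$ one has
\begin{equation*}
\sum_{xy\in E}\bigl|f(x)^2 - f(y)^2\bigr| \;\geq\; h \sum_x d_x\,\bigl|f(x)^2 - m\bigr|.
\end{equation*}
Factoring $|f^2 - f^2| = \bigl||f|-|f|\bigr|\,(|f|+|f|)$ on the left and applying Cauchy--Schwarz together with the identity $R_{\mathcal L}+R_Q \equiv 2$ and the first-step upper bound on $R_{\mathcal L}(g)$ gives
\begin{equation*}
\sum_{xy}|f^2-f^2| \;\leq\; \sqrt{R_{\mathcal L}(g)\,R_Q(g)} \;=\; \sqrt{R_{\mathcal L}(g)(2-R_{\mathcal L}(g))} \;\leq\; \sqrt{(1+\mu_{n-1})(1-\mu_{n-1})}.
\end{equation*}
On the left of the Cheeger bound I use $|f^2 - m| \geq \sqrt m\,\bigl||f|-\sqrt m\bigr|$ together with $\sum_x d_x \bigl||f|-\sqrt m\bigr| \geq |\bar g - \sqrt m|\,\mathrm{vol}(V)$ and the $L^1$--$L^2$ comparison connecting $\sum d_x||f|-\sqrt m|$ with $\sqrt{1-\bar g^2\,\mathrm{vol}(V)}$ via $(\star)$. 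Choosing $m$ optimally (near $\bar g^2$) then yields a second quantitative inequality relating $1+\mu_{n-1}$, $h$, and $\bar g^2\,\mathrm{vol}(V)$.

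Eliminating $\bar g^2\,\mathrm{vol}(V)$ from the two inequalities produces a quadratic in $\sqrt{1+\mu_{n-1}}$, and its positive root is the claimed lower bound, with the factor $\sqrt{1+h^2/(1-\mu_2)}$ emerging as the discriminant. The main obstacle is the Cheeger step: the $L^1$ output of Cheeger must be translated into a lower bound whose constants are sharp enough to combine with the $L^2$ Poincar\'e estimate $(\star)$, and the median $m$ must be chosen so that the discriminant comes out to precisely $1 + h^2/(1-\mu_2)$ rather than a coarser expression. Once that matching is done, both regimes of the theorem (small $h$ compared with $\sqrt{1-\mu_2}$ versus $h$ comparable with $\sqrt{1-\mu_2}$) are recovered uniformly.
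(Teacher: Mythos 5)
There is a genuine gap, and it is structural rather than a matter of sharpening constants. Your entire argument is built from a single eigenfunction $f$ of $\mu_{n-1}$ together with the auxiliary functions $|f|$ and $f^2$; nowhere do you use $\mu_n$, the eigenfunction of $\mu_n$, or the fact that $f$ is orthogonal to it. Consequently, if the argument worked it would apply verbatim to the eigenfunction of $\mu_n$ and prove the same lower bound for $1+\mu_n$. That conclusion is false: for a connected bipartite graph one has $1+\mu_n=0$ while $h>0$ and $1-\mu_2>0$, so the right-hand side of (\ref{11}) is strictly positive. The concrete breakdown is in your ``second quantitative inequality.'' Your estimate $(\star)$ says that when $1+\mu_{n-1}$ is small, $|f|$ is nearly constant in $\ell^2$; but then $f^2$ is also nearly constant, so \emph{both} sides of the co-area/Cheeger inequality applied to $f^2$ are near zero (in the bipartite model case they are exactly $0\geq 0$), and no lower bound on $1+\mu_{n-1}$ can be extracted. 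There is no choice of the median $m$ that rescues this: the tension you hope to exploit --- $|f|$ close to constant versus $f$ having mean zero --- is exactly the situation realized by the sign function of a bipartition, which is compatible with an eigenvalue equal to $-1$. In effect your scheme reproves the dual Cheeger inequality with the bipartiteness constant in place of $h$, and the bipartiteness constant is not bounded below by $h$.

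The missing idea, which is the heart of the paper's proof, is to use \emph{two} eigenfunctions: $f$ for $\mu_n$ and $g$ for $\mu_{n-1}$, normalized and mutually orthogonal. Their product $fg$ satisfies $\sum_u f(u)g(u)d_u=0$, so Proposition \ref{4} gives the lower bound $\sum_{\{u,v\}\in E}|f(u)g(u)-f(v)g(v)|\geq \tfrac{h}{2}\langle |f|,|g|\rangle$. The $\ell^1$-energy of $fg$ is bounded above by $\tfrac{\sqrt2}{2}(\sqrt{1+\mu_n}+\sqrt{1+\mu_{n-1}})$ (Lemma \ref{17}), which is small only when \emph{both} eigenvalues are near $-1$, while $\langle|f|,|g|\rangle$ is bounded below via the ``$|f|$ and $|g|$ nearly constant'' statement (Lemma \ref{3}, essentially your $(\star)$, applied to each function and combined in Lemma \ref{5}). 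Comparing the two bounds forces $1+\mu_{n-1}\geq c(h,1-\mu_2)$, and solving $f(c,c)=\tfrac{\sqrt2}{2}h$ produces exactly the expression in (\ref{11}). So your step $(\star)$ and the Cauchy--Schwarz manipulation of $\sum|f(u)^2-f(v)^2|$ are sound ingredients (they are close to Lemmas \ref{3} and \ref{17}), but they must be applied to the product of the two bottom eigenfunctions, not to the square of one of them.
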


As an application, we show the following estimates for the spectral gap $1+\mu_n$ of a non-bipartite vertex-transitive graph.
\begin{corollary}\label{26}
    Let $G$ be a finite, non-bipartite, vertex-transitive graph.
    There hold
\begin{equation}\label{22}
    1+\mu_n \geq {\rm min} \left\{\frac{2}{d}, \frac{(\sqrt{3}-1)^2}{8}h^2\right\},
\end{equation}
and
\begin{equation}\label{23}
    1+\mu_n \geq {\rm min} \left\{\frac{2}{d}, 2\left(\sqrt{1+\frac{1-\mu_2}{4}}-1\right)^2 \right \}.
\end{equation}
\end{corollary}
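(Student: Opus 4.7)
The plan is to combine Theorem~\ref{25} with a dichotomy special to vertex-transitive graphs, splitting on whether $\mu_n$ coincides with $\mu_{n-1}$ or is a simple eigenvalue. If $\mu_n=\mu_{n-1}$, Theorem~\ref{25} already bounds $1+\mu_n$. Otherwise $\mu_n$ is simple, and I will show separately that $1+\mu_n\geq 2/d$ for any non-bipartite vertex-transitive $G$. Taking the worse of the two bounds yields the minimum on the right-hand sides of (\ref{22}) and (\ref{23}), so that the remaining task is to rewrite the right-hand side of (\ref{11}) in two convenient closed forms.

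For the simple case, let $f$ be a unit $\mu_n$-eigenfunction. Because the eigenspace is one-dimensional and invariant under $\mathrm{Aut}(G)$, one has $f\circ\sigma=\pm f$ for every $\sigma\in\mathrm{Aut}(G)$, so $|f|$ is $\mathrm{Aut}(G)$-invariant. Vertex-transitivity then forces $|f|\equiv c$ for some $c>0$; since $\mu_n<\mu_1=1$ by connectedness, $f$ is not constant and attains both values $\pm c$. Set $V_\pm:=f^{-1}(\pm c)$. Inserting $f$ into the eigenvalue equation at $v\in V_+$ and using $d$-regularity (vertex-transitive graphs are regular) gives the integer identity $|N(v)\cap V_+|=d(1+\mu_n)/2$. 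Non-bipartiteness forbids $\mu_n=-1$ (else $(V_+,V_-)$ would be a bipartition), so this integer is strictly positive and hence $\geq 1$, which yields $1+\mu_n\geq 2/d$.

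To obtain the two closed forms, I will use the identity $(\sqrt{1+z}-1)^2=z^2/(\sqrt{1+z}+1)^2$ to recast the bound in (\ref{11}) as $h^2\big/\big(2(\sqrt{1+h^2/(1-\mu_2)}+1)^2\big)$. For (\ref{22}) I plug in the Cheeger bound $h^2\leq 2(1-\mu_2)$ to dominate the denominator by $2(\sqrt{3}+1)^2$ and then rationalize via $(\sqrt{3}+1)^{-2}=(\sqrt{3}-1)^2/4$, reaching $\tfrac{(\sqrt{3}-1)^2}{8}h^2$. For (\ref{23}) I instead note that $z\mapsto z/(\sqrt{1+z}+1)^2$ becomes $(u-1)/(u+1)$ under the substitution $u=\sqrt{1+z}$, hence is increasing in $z$; combining this monotonicity with the other Cheeger bound $h\geq (1-\mu_2)/2$ and reversing the identity produces $2\big(\sqrt{1+(1-\mu_2)/4}-1\big)^2$. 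I expect the main obstacle to be the reduction to $2/d$ in the simple case, since that is where vertex-transitivity (to force $|f|$ constant) and non-bipartiteness (to exclude $\mu_n=-1$) are jointly essential; the remainder is algebraic manipulation of Theorem~\ref{25}.
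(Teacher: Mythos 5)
Your proposal is correct and follows essentially the same route as the paper: the paper combines Theorem \ref{thm:vertextransitive} (whose proof is exactly your symmetry argument that a simple eigenvalue forces $|f|$ to be constant, giving $\mu=2k/d-1$ with non-bipartiteness excluding $k=0$) with Corollary \ref{15} (the same two Cheeger-inequality specializations of Theorem \ref{25}), split along the same dichotomy on the multiplicity of $\mu_n$. The only difference is that you re-derive these two ingredients inline rather than citing them.
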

This provides a finer version of an observation due to Breuillard-Green-Guralnick-Tao \cite[Proposition E.1]{1}, which tells that for finite non-bipartite Cayley graphs, combinatorial expansion implies spectral expansion. More precisely, they show that for such a connected graph, there exits $\delta>0$ depending only on its degree $d$ and vertex-expansion $h_{out}$, such that the following holds
\begin{equation}\label{9}
    1+\mu_n \geq \delta(d,h_{out}).
\end{equation}
Here the vertex-expansion $h_{out}$ is closely related to the edge-expansion $h$. Indeed, for $d$-regular graphs there holds $ h_{out}/d \leq h \leq h_{out}.$

By Cheeger inequality, their observation simply tells that, for finite non-bipartite Cayley graphs, once the gap $1-\mu_2$ is uniformly bounded away from $0$, so does the gap $1+\mu_n$.

It is natural to seek for an explicit formula for the lower bound
of $1+\mu_n$ in terms of $d$, $h_{out}$ or other closely related constants like the edge-expansion $h$ for Cayley, or more generally, vertex-transitive graphs. Various works have been done on this topic, see, for example, \cite{biswas2019cheeger,biswas2021cheeger,biswas2021spectral,moorman2022bipartiteness,saha2023cheeger,hu2023vertex}. A recent result of Saha \cite{saha2023cheeger} states that for non-bipartite vertex-transitive graphs there holds
\begin{equation}\label{20}
    1+\mu_n \geq C\frac{h^2}{d^2}.
\end{equation}
We use $C$ to denote an absolute constant which may change from line to line.
Indeed, we have by the dual Cheeger inequality \cite{trevisan2009max,bauer2009bipartite} $1+\mu_n\geq \beta^2/2$, where $\beta$ is the bipartiteness constant of the graph $G$. Saha \cite{saha2023cheeger} proves for non-bipartite vertex-transitive graphs that $\beta\geq C h/d$, solving an open question of Moorman-Ralli-Tetali \cite{moorman2022bipartiteness}.
Extending the work of Bobkov-Houdr\'e-Tetali \cite{bobkov2000lambda} on vertex-expansion to the setting of signed graphs,  Hu-Liu \cite{hu2023vertex} establish that for non-bipartite vertex-transitive graphs
\begin{equation}\label{21}
    1+\mu_n \geq C\frac{h_{out}^2}{d}.
\end{equation}
Recalling $h_{out}\geq h$, the estimate $(\ref{21})$ improves (\ref{20}).
Notice that our estimates (\ref{22}) and (\ref{23}) cannot be derived from (\ref{21}) and vice versa.

We achieve our estimates via a very different strategy from previous works \cite{biswas2019cheeger,biswas2021cheeger,biswas2021spectral,moorman2022bipartiteness,saha2023cheeger,hu2023vertex}. We rely more on spectral graph theoretic methods than combinatorial arguments. We are motivated by the Remark in \cite[Appendix E]{1} to consider the multiplicity of the eigenvalue $\mu_n$ and the product of two eigenfunctions, and find \cite{desai1994characterization} inspiring in obtaining Lemma \ref{3}.

Our strategy can be described as follows. By Theorem \ref{25}, all eigenvalues of a finite connected graph with multiplicity greater than $1$ is bounded away from $-1$.
On the other hand, for non-bipartite vertex-transitive graphs there is a natural gap of at least $2/d$ between any simple eigenvalue and $-1$, due to the symmetry of the graph. Combining the above two cases leads to the bounds in (\ref{22}) and (\ref{23}).

\section{Preliminaries}
Let $G = (V, E) $ be a finite graph. Recall that the degree matrix $D$ and adjacency matrix $A$ of $G$ is defined as follows:
\begin{equation*}
    D_{ij}=
    \begin{cases}
      d_i&  i=j,\\
       0  & i\neq j,
    \end{cases}
\end{equation*}
where $d_i$ is the degree of $i\in V$ and
\begin{equation*}
    A_{ij}=
    \begin{cases}
      1 &  \{i,j\} \in E,\\
       0  & \{i,j\} \notin E.
    \end{cases}
\end{equation*}
The \emph{normalized adjacency matrix} of a graph is defined as $D^{-1}A$.

Let $\mathbb{R}^{V}$ be the set of real-valued functions on $V$. For any $f,g\in \mathbb{R}^{V}$, we define their inner product as \[\langle f, g\rangle:=\sum_{u\in V}f(u)g(u)d_u.\] The corresponding $\ell^2$-norm of a function $f$ is defined as $\Vert f\Vert_2:=\langle f,f\rangle^{\frac{1}{2}}$. The following two identities are straightforward.
\begin{lemma}\label{1}
Let $G=(V,E)$ be a finite graph and $f$ be a function on $V$. We have
\begin{equation}
    \sum_{\{u,v\}\in E}(f(u)-f(v))^2=\  \langle f,(I-D^{-1}A)f\rangle
\end{equation}
and
\begin{equation}
    \sum_{\{u,v\}\in E}(f(u)+f(v))^2=\ \langle f,(I+D^{-1}A)f \rangle,
\end{equation}
where $I$ is the $n\times n$ identity matrix.
\end{lemma}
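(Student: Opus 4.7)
The plan is to verify both identities by straightforward algebraic expansion, treating the two cases in parallel since they differ only in a sign. First I would rewrite each sum over unordered edges $\{u,v\}\in E$ as one-half of the corresponding sum over ordered pairs, i.e.\ $\sum_{\{u,v\}\in E}F(u,v) = \tfrac{1}{2}\sum_{u\in V}\sum_{v\sim u}F(u,v)$ whenever $F(u,v)=F(v,u)$. This is the only bookkeeping step where a mistake is likely, so I would be careful to note that $(f(u)\pm f(v))^2$ is symmetric in $u,v$.

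Next I would expand the square on the left-hand side of the first identity, obtaining three terms: $f(u)^2$, $\mp 2f(u)f(v)$, and $f(v)^2$. Summing the $f(u)^2$ and $f(v)^2$ terms over edges and using symmetry collapses them to $\sum_{u\in V} f(u)^2 d_u$, since each vertex $u$ appears once for every neighbour, contributing $d_u$ copies of $f(u)^2$. The cross term yields $\mp\sum_{u\in V}\sum_{v\sim u}f(u)f(v) = \mp \sum_{u,v}A_{uv}f(u)f(v)$, since the $1/2$ from the ordered/unordered conversion cancels the factor of $2$ from the expansion.

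Finally I would expand the right-hand side using the definition of the inner product and the fact that $[(I\pm D^{-1}A)f](u) = f(u) \pm \tfrac{1}{d_u}\sum_v A_{uv} f(v)$. Multiplying by $d_u f(u)$ inside $\langle\cdot,\cdot\rangle$ clears the factor $1/d_u$ and produces exactly $\sum_u f(u)^2 d_u \pm \sum_{u,v}A_{uv}f(u)f(v)$, matching the left-hand side.

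There is no genuine obstacle here; the only point requiring care is the conversion between the sum over unordered edges and the sum over ordered adjacent pairs, and keeping track of how the factor $1/2$ interacts with the factor $2$ produced by expanding $(f(u)\pm f(v))^2$. Once these bookkeeping details are settled, both identities follow immediately and in an entirely parallel fashion.
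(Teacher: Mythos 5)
Your proposal is correct: the direct expansion, the careful conversion between unordered edges and ordered adjacent pairs, and the cancellation of $d_u$ against $1/d_u$ in the inner product are exactly the standard verification. The paper offers no proof at all (it simply calls the identities straightforward), so your argument supplies precisely the computation the paper implicitly relies on, with no difference in approach.
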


We list the $n:=|V|$ eigenvalues of $D^{-1}A$ counting multiplicity as
$$\mu_{n} \leq \mu_{n-1} \leq ... \leq \mu_{2} \leq \mu_{1}.$$
In fact, $\mu_{2}=1$ if and only if $G$ is disconnected, and $\mu_{n}=-1$ if and only if G has a bipartite connected component.

For any subset $S \subset V$, we define its volume vol$(S)$ as $${\rm vol}(S)=\sum_{i\in S} d_i,$$ and
its boundary $\partial S$ as
\begin{equation*}
    \partial S=\left\{\{i,j\}\in E:i \in S, j\notin S \right\}.
\end{equation*}
The edge-expansion of $S$ is then defined to be
\begin{equation}
    h(S)=\frac{|\partial S|}{{\rm min}\left({\rm vol}(S),{\rm vol}(V-S)\right)}.
\end{equation}
The edge-expansion (also called Cheeger constant) of $G$, denoted by $h$, is defined as
\begin{equation}
    h:=\mathop{\rm min}\limits_{S \subset V}h(S).
\end{equation}
Note that $0 \leq h \leq 1$, and $h=0$ if and only if $G$ is disconnected. Let us recall two classic results involving $h$.

\begin{proposition}[{\cite[Corollary 2.10]{chung1997spectral}}]\label{4}
Let $G=(V,E)$ be a finite graph. For any nonzero function $f$ on $V$ satisfying
$\sum_{u \in V}f(u)d_u=0$, we have
\begin{equation}
    \frac{\sum_{\{u,v\}\in E}|f(u)-f(v)|}{\sum_{u \in V}|f(u)|d_u}\geq \frac{1}{2}h.
\end{equation}

\end{proposition}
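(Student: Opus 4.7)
The plan is to apply the discrete co-area formula to the positive part of $f$ and let the mean-zero hypothesis supply the factor $1/2$. Write $f = f_+ - f_-$ with $f_\pm(u) := \max(\pm f(u), 0)$, and set $A := \{u \in V : f(u) > 0\}$, $B := \{u \in V : f(u) < 0\}$. A short case analysis on the signs of $f(u)$ and $f(v)$ shows that for every edge $\{u,v\}$,
\[
|f(u) - f(v)| \geq |f_+(u) - f_+(v)|.
\]
Since $A$ and $B$ are disjoint subsets of $V$, at least one of $\mathrm{vol}(A), \mathrm{vol}(B)$ is at most $\mathrm{vol}(V)/2$; as the inequality in the statement is invariant under $f \mapsto -f$, I would assume without loss of generality $\mathrm{vol}(A) \leq \mathrm{vol}(V)/2$.

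Next I would exploit the two standard identities (co-area and layer-cake) for the nonnegative function $f_+$:
\[
\sum_{\{u,v\} \in E} |f_+(u) - f_+(v)| = \int_0^\infty |\partial\{f_+ > t\}|\, dt, \qquad \sum_{u \in V} f_+(u) d_u = \int_0^\infty \mathrm{vol}(\{f_+ > t\})\, dt.
\]
For each $t > 0$ the super-level set $\{f_+ > t\}$ is contained in $A$, hence has volume at most $\mathrm{vol}(V)/2$. By the definition of $h$ applied to this set, $|\partial\{f_+ > t\}| \geq h \cdot \mathrm{vol}(\{f_+ > t\})$. Integrating over $t > 0$ then yields
\[
\sum_{\{u,v\} \in E} |f_+(u) - f_+(v)| \geq h \sum_{u \in V} f_+(u) d_u.
\]

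Finally, the hypothesis $\sum_u f(u) d_u = 0$ forces $\sum_u f_+(u) d_u = \sum_u f_-(u) d_u = \tfrac{1}{2} \sum_u |f(u)| d_u$, so combining the three inequalities above produces the desired bound $\frac{1}{2} h$. The only mildly delicate step is the WLOG reduction that selects whichever of $A, B$ has volume at most $\mathrm{vol}(V)/2$; the pointwise inequality $|f(u)-f(v)|\geq|f_+(u)-f_+(v)|$ and the integration against the Cheeger constant on each super-level set are routine, so I do not anticipate a substantive obstacle.
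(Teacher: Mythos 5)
The paper does not prove this proposition; it is imported verbatim as Corollary 2.10 of Chung's \emph{Spectral Graph Theory}, so there is no in-paper argument to compare against. Your proof is correct and is essentially the standard one from that reference: the pointwise bound $|f(u)-f(v)|\geq|f_+(u)-f_+(v)|$, the co-area and layer-cake identities applied to the super-level sets of $f_+$ (each of which has volume at most $\mathrm{vol}(V)/2$ after the WLOG, so the Cheeger constant applies with $\min(\mathrm{vol}(S),\mathrm{vol}(V-S))=\mathrm{vol}(S)$), and the mean-zero condition giving $\sum_u f_+(u)d_u=\tfrac12\sum_u|f(u)|d_u$ all check out.
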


\begin{theorem}[Cheeger inequality \cite{alon1985lambda1,alon1986eigenvalues,dodziuk1984difference}]
Let $G=(V,E)$ be a finite graph. We have
\begin{equation}\label{18}
    2h \geq 1-\mu_2 \geq \frac{1}{2}h^2.
\end{equation}
\end{theorem}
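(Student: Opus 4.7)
The plan is to establish the two bounds separately, starting from the variational description
\begin{equation*}
1-\mu_2 = \min_{\substack{f\neq 0 \\ \sum_u f(u)d_u=0}}\frac{\sum_{\{u,v\}\in E}(f(u)-f(v))^2}{\sum_u f(u)^2 d_u},
\end{equation*}
which follows from Lemma \ref{1} together with the fact that $I-D^{-1}A$ is self-adjoint with respect to $\langle\cdot,\cdot\rangle$, with smallest eigenvalue $0$ attained on the constant function.

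For the easy inequality $1-\mu_2\leq 2h$, I would plug in the natural test function built from an optimal Cheeger cut. Let $S\subset V$ attain $h$ with $\mathrm{vol}(S)\leq \mathrm{vol}(V)/2$, and set $f:=\mathrm{vol}(V\setminus S)\,\mathbf{1}_S-\mathrm{vol}(S)\,\mathbf{1}_{V\setminus S}$, which is $\langle\cdot,\cdot\rangle$-orthogonal to the constant function. A direct computation gives numerator $|\partial S|\,\mathrm{vol}(V)^2$ and denominator $\mathrm{vol}(S)\,\mathrm{vol}(V\setminus S)\,\mathrm{vol}(V)$, so the Rayleigh quotient equals $|\partial S|\,\mathrm{vol}(V)/(\mathrm{vol}(S)\,\mathrm{vol}(V\setminus S))\leq 2|\partial S|/\mathrm{vol}(S)=2h$, using $\mathrm{vol}(V\setminus S)\geq \mathrm{vol}(V)/2$.

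The substantive direction is $1-\mu_2\geq h^2/2$. Let $f$ be a $\mu_2$-eigenfunction and partition $V$ by the sign of $f$; after possibly swapping signs, assume the positive support $V_+:=\{f>0\}$ satisfies $\mathrm{vol}(V_+)\leq \mathrm{vol}(V)/2$, and set $g:=f\cdot\mathbf{1}_{V_+}$. The first key step is the one-sided Rayleigh bound
\begin{equation*}
\sum_{\{u,v\}\in E}(g(u)-g(v))^2 \leq (1-\mu_2)\,\|g\|_2^2,
\end{equation*}
which I would establish by multiplying the eigenfunction equation $(I-D^{-1}A)f=(1-\mu_2)f$ by $g(u)d_u$ and summing over $u\in V_+$: the cross-edge contributions into $V\setminus V_+$ enter with a favourable sign because $f(u)>0$ and $f(v)\leq 0$ on those edges. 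The second key step is a co-area bound applied to the monotone quantity $g^2$: for every $t>0$ the superlevel set $\{g^2>t\}$ sits inside $V_+$, hence has volume at most $\mathrm{vol}(V)/2$, so $|\partial\{g^2>t\}|\geq h\,\mathrm{vol}(\{g^2>t\})$; integrating in $t$ produces
\begin{equation*}
\sum_{\{u,v\}\in E}|g(u)^2-g(v)^2|\geq h\,\|g\|_2^2.
\end{equation*}
Factoring $g(u)^2-g(v)^2=(g(u)-g(v))(g(u)+g(v))$, applying Cauchy-Schwarz, and using $\sum_{\{u,v\}\in E}(g(u)+g(v))^2\leq 2\sum_u g(u)^2 d_u=2\|g\|_2^2$ gives $h\,\|g\|_2^2\leq \sqrt{(1-\mu_2)\|g\|_2^2}\cdot\sqrt{2\|g\|_2^2}$, which squares to $1-\mu_2\geq h^2/2$.

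I expect the main technical hurdle to be the truncation step. The function $g=f_+$ is not an eigenfunction and need not have zero weighted mean, so a Rayleigh bound is not automatic; the argument crucially exploits that cutting $f$ at the single level $0$ reduces the Dirichlet energy on the edges crossing between $V_+$ and $V\setminus V_+$ by more than it reduces the $\ell^2$-mass, a property that rests on the sign of $f$ on $V\setminus V_+$. Once this one-sided bound is in hand, the rest is the co-area identity on the monotone function $g^2$ plus a standard Cauchy-Schwarz manoeuvre.
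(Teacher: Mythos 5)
The paper states this theorem as a cited classical result and gives no proof of its own, so there is nothing internal to compare against; judged on its own terms, your argument is correct and is exactly the standard proof found in the cited references (e.g.\ Chung's book). Both halves check out: the test function $f=\mathrm{vol}(V\setminus S)\mathbf{1}_S-\mathrm{vol}(S)\mathbf{1}_{V\setminus S}$ computation for $1-\mu_2\le 2h$ is right, and for the hard direction the one-sided Rayleigh bound for $g=f\mathbf{1}_{V_+}$ does follow from summing the eigenvalue equation over $V_+$ (the cross edges contribute $-f(u)f(v)\ge 0$), the superlevel sets of $g^2$ all have volume at most $\mathrm{vol}(V)/2$ so the co-area bound applies with the correct normalization from the paper's definition of $h$, and the final Cauchy--Schwarz step with $\sum_{\{u,v\}\in E}(g(u)+g(v))^2\le 2\|g\|_2^2$ closes the argument.
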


\section{Proof of Theorem \ref{25}}

First, we prepare the following lemmas.

\begin{lemma}\label{17}
Let $G=(V,E)$ be a finite graph and $f,g$ be two eigenfunctions of the eigenvalues $\mu$ and $\nu$ of the normalized adjacency matrix $D^{-1}A$, respectively.
We have the following estimate
\begin{equation*}
    \sum_{\{u,v\}\in E}|f(u)g(u)-f(v)g(v)|\leq \frac{\sqrt{2}}{2} \left(\sqrt{1+\mu}+\sqrt{1+\nu}\right)\Vert f\Vert_2\Vert g\Vert_2.
\end{equation*}
\begin{proof}
By Cauchy-Schwarz inequality and Lemma \ref{1}, we calculate
\begin{align*}
    & \sum_{\{u,v\}\in E}2|f(u)g(u)-f(v)g(v)|\\  = &\sum_{\{u,v\}\in E}|(f(u)-f(v))(g(u)+g(v))+(f(u)+f(v))(g(u)-g(v))|\\
    \leq & \sum_{\{u,v\}\in E}|(f(u)-f(v))(g(u)+g(v))|+\sum_{\{u,v\}\in E}|(f(u)+f(v))(g(u)-g(v))|\\
    \leq &\sqrt{\sum_{\{u,v\}\in E}(f(u)-f(v))^2}\sqrt{\sum_{\{u,v\}\in E}(g(u)+g(v))^2} \\
    &\hspace{3.5cm} +\sqrt{\sum_{\{u,v\}\in E}(f(u)+f(v))^2}\sqrt{\sum_{\{u,v\}\in E}(g(u)-g(v))^2}\\
    =& \left(\sqrt{(1-\mu)(1+\nu)}+\sqrt{(1+\mu)(1-\nu)}\right) \Vert f\Vert_2\Vert g\Vert_2 \\
    \leq & \sqrt{2} (\sqrt{1+\mu}+\sqrt{1+\nu})\Vert f\Vert_2\Vert g\Vert_2.
\end{align*}
This completes the proof.
\end{proof}
\end{lemma}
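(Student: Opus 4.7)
The plan is to bound the left-hand side edge-by-edge via the symmetrization identity
\[
2\bigl(f(u)g(u)-f(v)g(v)\bigr) = \bigl(f(u)-f(v)\bigr)\bigl(g(u)+g(v)\bigr) + \bigl(f(u)+f(v)\bigr)\bigl(g(u)-g(v)\bigr),
\]
which splits the edge-difference of the product $fg$ into two terms, each pairing a \emph{difference} of one factor with a \emph{sum} of the other. After inserting this inside $|\cdot|$, applying the triangle inequality, and then Cauchy--Schwarz on each of the two sums over edges, the right-hand side becomes a sum of two products of square roots of ``difference-squared'' and ``sum-squared'' edge sums.

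The next step is to recognize each of those four square roots via Lemma \ref{1}: the edge sum of $(f(u)-f(v))^2$ equals $\langle f,(I-D^{-1}A)f\rangle$, and similarly for the $+$ version and for $g$. Because $f$ and $g$ are eigenfunctions with eigenvalues $\mu$ and $\nu$, these inner products collapse to $(1\mp\mu)\Vert f\Vert_2^2$ and $(1\mp\nu)\Vert g\Vert_2^2$ respectively. Combining everything yields
\[
2\sum_{\{u,v\}\in E}|f(u)g(u)-f(v)g(v)|\leq \left(\sqrt{(1-\mu)(1+\nu)}+\sqrt{(1+\mu)(1-\nu)}\right)\Vert f\Vert_2\Vert g\Vert_2.
\]

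The last step is the elementary estimate $1-\mu\leq 2$ and $1-\nu\leq 2$ (valid since eigenvalues of $D^{-1}A$ lie in $[-1,1]$), which gives $\sqrt{(1-\mu)(1+\nu)}\leq\sqrt{2}\sqrt{1+\nu}$ and $\sqrt{(1+\mu)(1-\nu)}\leq\sqrt{2}\sqrt{1+\mu}$; dividing by $2$ produces the stated inequality with prefactor $\sqrt{2}/2$.

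I do not anticipate a real obstacle. The only nontrivial piece is recognizing that the symmetrization identity is the right move: it is what forces each term to carry exactly one $\sqrt{1+\lambda}$ factor, thereby converting a pointwise product estimate into a bound that \emph{gains} a spectral-gap-at-$-1$ weight. That is precisely the mechanism needed to feed into Theorem \ref{25}, where one wants to leverage smallness of $1-\mu_2$ to force smallness of $1+\mu_{n-1}$.
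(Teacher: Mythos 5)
Your proposal is correct and follows essentially the same route as the paper: the same symmetrization identity $2(f(u)g(u)-f(v)g(v))=(f(u)-f(v))(g(u)+g(v))+(f(u)+f(v))(g(u)-g(v))$, followed by the triangle inequality, Cauchy--Schwarz over edges, the identities of Lemma \ref{1} applied to the eigenfunctions, and the final bound $1-\mu\leq 2$, $1-\nu\leq 2$. All steps check out, including the factor of $2$ bookkeeping that yields the prefactor $\sqrt{2}/2$.
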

Lemma \ref{17} tells that the $\ell^1$-energy of the product $fg$ of two eigenfunctions is close to $0$, whenever the corresponding two eigenvalues are close to $-1$.

The next lemma shows that if an eigenvalue is close to $-1$, then the absolute value of its eigenfunction is close to being a constant.
\begin{lemma}\label{3}
    Let $G=(V,E)$ be a finite connected graph. Assume that $f$ is an eigenfunction of the eigenvalue $\mu$ of the normalized adjacency matrix $D^{-1}A$ with $\Vert f\Vert_2=1$.
    Let  $\boldsymbol{c}$ is the constant function such that $\Vert c\Vert_2=1$ and $f_1:=|f|-\langle |f|,\boldsymbol{c}\rangle\boldsymbol{c}$. Then we have

\begin{equation*}
   \Vert f_1\Vert_2^2  \leq \frac{1+\mu}{1-\mu_2}
\end{equation*}
and
\begin{equation*}
    \langle |f|, \boldsymbol{c}\rangle^2 \geq 1-\frac{1+\mu}{1-\mu_2}.
\end{equation*}
\end{lemma}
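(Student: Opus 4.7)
The plan is to view the first inequality as a Rayleigh quotient estimate for the function $f_1$, using that $f_1$ is orthogonal (in the degree-weighted inner product) to the constants. Indeed, by construction $\langle f_1, \boldsymbol{c}\rangle = \langle |f|, \boldsymbol{c}\rangle - \langle |f|,\boldsymbol{c}\rangle\langle \boldsymbol{c},\boldsymbol{c}\rangle = 0$, so the min-max characterization of $\mu_2$ (equivalently, of the second smallest Laplacian eigenvalue $1-\mu_2$) gives
\begin{equation*}
\langle f_1, (I - D^{-1}A) f_1\rangle \geq (1-\mu_2)\,\Vert f_1\Vert_2^2.
\end{equation*}

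Next I would rewrite the left-hand side via Lemma~\ref{1}. Because subtracting a constant does not change edge-differences, one has $f_1(u) - f_1(v) = |f(u)| - |f(v)|$ for every edge $\{u,v\}$, so
\begin{equation*}
\langle f_1, (I - D^{-1}A) f_1\rangle = \sum_{\{u,v\}\in E}\bigl(|f(u)| - |f(v)|\bigr)^2.
\end{equation*}
The key elementary inequality is that for any real numbers $a,b$ one has $(|a| - |b|)^2 \leq (a+b)^2$; this follows at once from $(a+b)^2 - (|a|-|b|)^2 = 2(ab + |ab|)\geq 0$. Applying this pointwise on each edge and then invoking Lemma~\ref{1} a second time yields
\begin{equation*}
\sum_{\{u,v\}\in E}\bigl(|f(u)| - |f(v)|\bigr)^2 \leq \sum_{\{u,v\}\in E}\bigl(f(u)+f(v)\bigr)^2 = \langle f, (I+D^{-1}A) f\rangle = (1+\mu)\Vert f\Vert_2^2 = 1+\mu.
\end{equation*}
Combining this with the Rayleigh bound above immediately gives $\Vert f_1\Vert_2^2 \leq (1+\mu)/(1-\mu_2)$, which is the first inequality.

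For the second inequality, I would use the orthogonal decomposition $|f| = \langle |f|, \boldsymbol{c}\rangle\boldsymbol{c} + f_1$, together with $\langle f_1, \boldsymbol{c}\rangle = 0$ and $\Vert \boldsymbol{c}\Vert_2 = 1$. Pythagoras then gives $\Vert |f|\Vert_2^2 = \langle |f|, \boldsymbol{c}\rangle^2 + \Vert f_1\Vert_2^2$, and since $\Vert |f|\Vert_2 = \Vert f\Vert_2 = 1$, rearranging and inserting the first bound produces $\langle |f|, \boldsymbol{c}\rangle^2 \geq 1 - (1+\mu)/(1-\mu_2)$.

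There is no serious obstacle here once one decides to study $|f|$ rather than $f$ itself; the only nontrivial ingredient is recognizing that the sign-flip trick $(|a|-|b|)^2 \leq (a+b)^2$ is exactly what converts the $(1-\mu_2)$-type quadratic form of $|f|$ into the $(1+\mu)$-type quadratic form of $f$, so that the assumption that $\mu$ is close to $-1$ gets used. The rest is bookkeeping with Lemma~\ref{1} and the Pythagorean identity.
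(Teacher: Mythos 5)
Your proposal is correct and follows essentially the same route as the paper's proof: the Rayleigh quotient bound for $f_1$ (orthogonal to constants), the identification of the quadratic form of $f_1$ with that of $|f|$, the pointwise inequality $(|a|-|b|)^2\leq (a+b)^2$ converting the $(I-D^{-1}A)$-form of $|f|$ into the $(I+D^{-1}A)$-form of $f$, and the Pythagorean identity for the second inequality. The only difference is that you spell out the elementary inequality's justification, which the paper leaves implicit.
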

\begin{proof}
Oberserving that $\langle f_1, \boldsymbol{c} \rangle=0$, we have $$(1-\mu_2) \langle f_1,f_1\rangle \leq \langle f_1,(I-D^{-1}A)f_1\rangle.$$
Therefore, we estimate by Lemma \ref{1}
\begin{equation*}
    \begin{split}
       \Vert f_1\Vert_2^2 &\leq \frac{1}{1-\mu_2}\langle f_1,(I-D^{-1}A)f_1\rangle= \frac{1}{1-\mu_2}\langle |f|,(I-D^{-1}A)|f|\rangle\\
       &= \frac{1}{1-\mu_2} \sum_{\{u,v\}\in E}(|f(u)|-|f(v)|)^2\\
       & \leq \frac{1}{1-\mu_2} \sum_{\{u,v\}\in E}(f(u)+f(v))^2\\
       & =\frac{1}{1-\mu_2} \langle f,(I+D^{-1}A)f \rangle\\
       & =\frac{1+\mu}{1-\mu_2}.
       \end{split}
\end{equation*}
The other inequality holds since $$1=\Vert f\Vert_2^2=\langle |f|,\boldsymbol{c}\rangle^2+\Vert f_1\Vert_2^2.$$
This concludes the proof.
\end{proof}
The following lemma is a useful consequence of Lemma \ref{3}. It bounds from below the $\ell^1$-norm of the product of two eigenfunctions.
\begin{lemma} \label{5}
    Let $G=(V,E)$ be a finite connected graph and $f,g$ be eigenfunctions of the eigenvalues $\mu$ and $\nu$ of $D^{-1}A$, respectively, such that  $\Vert f\Vert_2 =\Vert g\Vert_2=1$. Then
    \begin{equation}
        \langle |f|,|g|\rangle \geq \sqrt{1-\frac{1+\mu}{1-\mu_2}}\sqrt{1-\frac{1+\nu}{1-\mu_2}}-\sqrt{\frac{1+\mu}{1-\mu_2}}\sqrt{\frac{1+\nu}{1-\mu_2}}.
    \end{equation}
\begin{proof}
    Let  $\boldsymbol{c}$ be a positive constant function with $\Vert c\Vert_2=1$. Decompose $|f|$ and $|g|$ such that
$$|f|=\langle |f|,\boldsymbol{c}\rangle\boldsymbol{c}+f_1,\,\,\,\,\text{and}\,\,\,\,|g|=\langle |g|,\boldsymbol{c}\rangle\boldsymbol{c}+g_1.$$ Then we compute
\begin{equation*}
\begin{split}
    \langle |f|,|g|\rangle&=\langle |f|,\boldsymbol{c}\rangle\langle |g|,\boldsymbol{c}\rangle+\langle f_1,g_1\rangle\\
    &\geq \langle |f|,\boldsymbol{c}\rangle\langle |g|,\boldsymbol{c}\rangle-\Vert f_1\Vert_2\Vert g_1\Vert_2\\
    &\geq\sqrt{1-\frac{1+\mu}{1-\mu_2}}\sqrt{1-\frac{1+\nu}{1-\mu_2}}-\sqrt{\frac{1+\mu}{1-\mu_2}}\sqrt{\frac{1+\nu}{1-\mu_2}}
\end{split}
\end{equation*}
where the last inequality comes from Lemma \ref{3}.
\end{proof}
\end{lemma}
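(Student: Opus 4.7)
The plan is to reduce the lower bound on $\langle |f|, |g|\rangle$ to the two quantitative bounds already established in Lemma \ref{3}, by performing an orthogonal decomposition against the constant function.

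First, I would fix the positive constant function $\boldsymbol{c}$ of unit $\ell^2$-norm and write the orthogonal decompositions $|f|=\alpha\boldsymbol{c}+f_1$ and $|g|=\beta\boldsymbol{c}+g_1$, where $\alpha=\langle |f|,\boldsymbol{c}\rangle\geq 0$, $\beta=\langle |g|,\boldsymbol{c}\rangle\geq 0$, and $f_1,g_1$ are orthogonal to $\boldsymbol{c}$. Expanding the inner product and using orthogonality, the two cross terms drop out and one obtains
\begin{equation*}
\langle |f|,|g|\rangle=\alpha\beta+\langle f_1,g_1\rangle.
\end{equation*}

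Next, I would apply Cauchy-Schwarz to the remainder term in the form $\langle f_1,g_1\rangle\geq -\|f_1\|_2\|g_1\|_2$. Substituting the upper bounds $\|f_1\|_2^2\leq (1+\mu)/(1-\mu_2)$ and $\|g_1\|_2^2\leq (1+\nu)/(1-\mu_2)$ from Lemma \ref{3}, together with the lower bounds $\alpha^2\geq 1-(1+\mu)/(1-\mu_2)$ and $\beta^2\geq 1-(1+\nu)/(1-\mu_2)$ (also from Lemma \ref{3}), yields precisely
\begin{equation*}
\langle |f|,|g|\rangle\geq \sqrt{1-\tfrac{1+\mu}{1-\mu_2}}\sqrt{1-\tfrac{1+\nu}{1-\mu_2}}-\sqrt{\tfrac{1+\mu}{1-\mu_2}}\sqrt{\tfrac{1+\nu}{1-\mu_2}}.
\end{equation*}

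There is no substantial obstacle here: the entire argument is a short application of Lemma \ref{3} once the correct decomposition is chosen. The only minor subtlety is that the lower bounds on $\alpha$ and $\beta$ from Lemma \ref{3} only yield information about $\alpha^2$ and $\beta^2$, but since $\alpha,\beta\geq 0$ (both $|f|,|g|$ and $\boldsymbol{c}$ are non-negative), taking square roots is legitimate. Note also that if either $1+\mu\geq 1-\mu_2$ or $1+\nu\geq 1-\mu_2$, the claimed inequality becomes trivial or vacuous (the right-hand side being non-positive), so the interesting case is precisely when Lemma \ref{3} gives genuine control on $\alpha,\beta$.
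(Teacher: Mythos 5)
Your proof is correct and follows essentially the same route as the paper: the same orthogonal decomposition against the unit constant function, the same Cauchy--Schwarz step $\langle f_1,g_1\rangle\geq -\Vert f_1\Vert_2\Vert g_1\Vert_2$, and the same substitution of the two bounds from Lemma \ref{3}. Your added remarks on the signs of $\alpha,\beta$ and on the vacuous case are sensible clarifications but do not change the argument.
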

We are now prepared to prove Theorem \ref{25}.
\begin{proof}[Proof of Theorem \ref{25}]
    Let $f,g$ be eigenfunctions of the eigenvalues $\mu_n$ and $\mu_{n-1}$ such that $\Vert f\Vert_2=\Vert g\Vert_2=1$ and
    $\langle f, g\rangle=0$, i.e., $\sum_{u\in V}f(u)g(u)d_u=0$. Applying Proposition \ref{4} to the function $fg$ and inserting the estimates from Lemma \ref{17} and Lemma \ref{5}, we derive the following inequality
    $$
    \frac{\sqrt{2}}{2}h\leq \frac{ \sqrt{1+\mu_n}+\sqrt{1+\mu_{n-1}}}{\sqrt{1-\frac{1+\mu_n}{1-\mu_2}}\sqrt{1-\frac{1+\mu_{n-1}}{1-\mu_2}}-\sqrt{\frac{1+\mu_n}{1-\mu_2}}\sqrt{\frac{1+\mu_{n-1}}{1-\mu_2}}},
    $$
    when $1+\mu_{n-1} < (1-\mu_2)/2$.
    Notice that the following function
\begin{equation}
f(x,y)=\frac{\sqrt{x}+\sqrt{y}}{\sqrt{1-\frac{1}{1-\mu_2}x}\sqrt{1-\frac{1}{1-\mu_2}y}-\sqrt{\frac{1}{1-\mu_2}x}\sqrt{\frac{1}{1-\mu_2}y}}
\end{equation}
is continuous on $[0,\frac{1}{2}(1-\mu_2))\times[0,\frac{1}{2}(1-\mu_2))$, monotonically increasing with respect to either $x$ or $y$ and $f(0,0)=0$. Let $c=c(h,1-\mu_2)$ be the number such that $$f(c,c)=\frac{\sqrt{2}}{2}h.$$ Then we have $$1+\mu_{n-1}\geq c,$$ since, otherwise,
$$f(1+\mu_n,1+\mu_{n-1})< f(c,c)=\frac{\sqrt{2}}{2}h$$ is a contradiction. A direct computation shows that
\begin{equation*}
        \sqrt{c}=\frac{1-\mu_2}{\sqrt{2}h}
\left(\sqrt{1+\frac{h^2}{1-\mu_2}}-1\right).
\end{equation*}
This proves the inequality (\ref{11}) when $1+\mu_{n-1}<(1-\mu_2)/2$.

For the case that $1+\mu_{n-1} \geq (1-\mu_2)/2$, the inequality (\ref{11}) still holds since
$$1+\mu_{n-1} \geq \frac{1-\mu_2}{2}\geq \frac{(1-\mu_2)^2}{2h^2}
\left(\sqrt{1+\frac{h^2}{1-\mu_2}}-1\right)^2,$$
while the last inequality comes from the fact that $1-\mu_2 \geq 0$.
\end{proof}

It is direct to check that $c=c(h,1-\mu_2)$ is  monotonically increasing with respect to either $h$ or $1-\mu_2$. Replacing $1-\mu_2$ with powers of $h$ or doing the opposite via the Cheeger inequality (\ref{18}) yields the following result.

\begin{corollary}\label{15}
    Let $G=(V,E)$ be a finite graph. Let $n$, $\mu_2$, $\mu_{n-1}$ and  $h$ be as in Theorem \ref{1}. Then, we have
    \begin{equation}
        1+\mu_{n-1}\geq 2\left(\sqrt{1+\frac{1-\mu_2}{4}}-1\right)^2,
    \end{equation}

\begin{equation}\label{10}
        1+\mu_{n-1}\geq \frac{(\sqrt{3}-1)^2}{8}h^2.
\end{equation}
\end{corollary}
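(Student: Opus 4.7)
The plan is to treat Corollary \ref{15} as a direct consequence of Theorem \ref{25} combined with the Cheeger inequality (\ref{18}), and the hint from the paragraph preceding the statement makes this quite transparent. First I would rewrite the right-hand side of (\ref{11}) in a form better suited to monotonicity arguments. Using the identity $(\sqrt{1+x}-1)^2 = x^2/(\sqrt{1+x}+1)^2$ with $x=h^2/(1-\mu_2)$, the bound of Theorem \ref{25} becomes
\begin{equation*}
    1+\mu_{n-1} \geq \frac{h^2}{2\left(\sqrt{1+\frac{h^2}{1-\mu_2}}+1\right)^2}=:F(1-\mu_2,h).
\end{equation*}
In this form it is essentially immediate to check that $F(a,h)$ is monotonically increasing in $a$ for fixed $h$ (because increasing $a$ shrinks $h^2/a$ and hence the denominator) and monotonically increasing in $h$ for fixed $a$ (a brief computation of the derivative with respect to $h^2$, which one can certainly carry out if requested, confirms this).

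With the monotonicity in hand, the two inequalities follow by inserting the two halves of (\ref{18}) into the two different slots. For (\ref{10}), I would use $1-\mu_2 \geq h^2/2$, so that $F(1-\mu_2,h) \geq F(h^2/2,h)$, and the expression inside the square root collapses to $\sqrt{1+2}+1=\sqrt{3}+1$; then the elementary rationalization $1/(\sqrt{3}+1)^2 = (\sqrt{3}-1)^2/4$ produces exactly the prefactor $(\sqrt{3}-1)^2/8$. For the other inequality I would use the opposite half of Cheeger, $h \geq (1-\mu_2)/2$, substituting $F(1-\mu_2,h)\geq F(1-\mu_2,(1-\mu_2)/2)$, at which point the expression reduces (after applying $(\sqrt{1+x}-1)^2 = x^2/(\sqrt{1+x}+1)^2$ in the reverse direction with $x=(1-\mu_2)/4$) to $2(\sqrt{1+(1-\mu_2)/4}-1)^2$.

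I do not expect any real obstacle: the substantive content is already encoded in Theorem \ref{25}, and the remaining work is purely algebraic. The only point that requires any care is verifying the monotonicity of $F$ in each of its two arguments, since the lower bounds from Cheeger must be inserted in the direction that preserves the inequality; rewriting the square-root difference as $x/(\sqrt{1+x}+1)$ makes both monotonicity checks transparent and also makes the two target expressions fall out with matching denominators, so no further massaging is needed.
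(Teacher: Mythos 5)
Your proposal is correct and follows exactly the route the paper intends: the paper's own justification is the one-line remark that the right-hand side of (\ref{11}) is monotonically increasing in both $h$ and $1-\mu_2$, so that substituting the two halves of the Cheeger inequality (\ref{18}) yields the two bounds. Your rewriting of the bound as $h^2/\bigl(2(\sqrt{1+h^2/(1-\mu_2)}+1)^2\bigr)$ is a clean way to make the monotonicity and the final algebra transparent, and all of your computations check out.
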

\begin{remark}
    If we do not care about the constant, the inequality (\ref{10}) can also be derived from the higher order dual Cheeger inequalities  \cite[Theorem 1.2]{liu2015multi}. Indeed, we have $1+\mu_{n-1}\geq C(1-\overline{h}(2))^2$, where $\overline{h}(2)$ stands for the two-way dual Cheeger constant. Then the inequality (\ref{10}) follows directly from the observation that $1-\overline{h}(2)\geq h$. The proof of higher order dual Cheeger inequalities involves applying deep results from the random partition theory. Our proof here is much simpler and more elementary. We also obtain a better constant here.
\end{remark}

\section{On non-bipartite vertex-transitive graphs}
For graphs such that $\mu_n=\mu_{n-1}$, Theorem \ref{1} provides a bound for the smallest eigenvalue of the normalized adjacency matrix. For vertex-transitive graphs we have the following gap phenomenon.
\begin{theorem}\label{thm:vertextransitive}
    Let $G$ be a finite connected vertex-transitive graph and $\mu$ a simple eigenvalue of its normalized adjacency matrix. Then
    \begin{equation}
    \mu=\frac{2k}{d}-1
    \end{equation}
    where $0\leq k \leq d$ is an integer and
    $d$ is the degree of $G$.
\end{theorem}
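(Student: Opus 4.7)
The plan is to exploit the symmetry of the graph through its automorphism group. Since $G$ is vertex-transitive, it is $d$-regular, so $D=dI$ and $D^{-1}A=\tfrac{1}{d}A$. The automorphism group $\Gamma:=\mathrm{Aut}(G)$ acts on $\mathbb{R}^V$ by permutation, $(P_{\sigma}f)(v)=f(\sigma^{-1}v)$, and each $P_{\sigma}$ commutes with $A$. Consequently every eigenspace of $D^{-1}A$ is $\Gamma$-invariant.

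Next I would use that $\mu$ is simple to extract a sign character. The $\mu$-eigenspace is one-dimensional, so for any unit eigenfunction $f$ and every $\sigma\in\Gamma$ there exists $\chi(\sigma)\in\mathbb{R}$ with $P_{\sigma}f=\chi(\sigma)f$. Because the graph is $d$-regular, the inner product $\langle\cdot,\cdot\rangle$ of the paper is just $d$ times the standard one, and permutation matrices are orthogonal for it; thus $|\chi(\sigma)|=1$, i.e., $\chi(\sigma)\in\{\pm1\}$. Combined with transitivity of $\Gamma$ on $V$, this forces $|f(u)|=|f(v)|$ for all $u,v\in V$, so $|f|\equiv a$ for some $a>0$ and $V$ splits as $V^{+}\sqcup V^{-}$ with $V^{\pm}:=\{v:f(v)=\pm a\}$.

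The last step is to plug this two-valued eigenfunction into the eigenvalue equation. For any $u\in V^{+}$, letting $k_{+}(u),k_{-}(u)$ denote the number of neighbors of $u$ in $V^{+}$ and $V^{-}$ respectively, the identity $(D^{-1}Af)(u)=\mu f(u)$ becomes
\begin{equation*}
k_{+}(u)-k_{-}(u)=\mu d,\qquad k_{+}(u)+k_{-}(u)=d,
\end{equation*}
which gives $k_{+}(u)=d(1+\mu)/2$. Since $k_{+}(u)$ is an integer in $\{0,1,\dots,d\}$, so is $k:=d(1+\mu)/2$, and $\mu=2k/d-1$ as required. The degenerate case $V^{-}=\emptyset$ just corresponds to $f$ constant, $\mu=1$, $k=d$, which is also consistent.

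I do not expect any substantive obstacle here. The argument reduces to the standard fact that a group of symmetries commuting with a self-adjoint operator acts by characters on its simple eigenspaces, which in the orthogonal real setting means by $\pm 1$; vertex-transitivity then upgrades this to the conclusion that $|f|$ is constant. The only small bookkeeping items are checking that the weighted inner product of the paper is compatible with permutation orthogonality (immediate from $d$-regularity) and handling the trivial case $\mu=1$, both of which are routine.
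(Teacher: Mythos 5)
Your argument is correct and is essentially the paper's own proof: both use simplicity of $\mu$ to get a sign character $\chi:\mathrm{Aut}(G)\to\{\pm1\}$ on the one-dimensional eigenspace, invoke transitivity to conclude $|f|$ is constant, and then evaluate the eigenvalue equation at a single vertex to force $\mu d$ to have the form $k_{+}-k_{-}$ with $k_{+}+k_{-}=d$. The only difference is cosmetic (your explicit bookkeeping with $V^{+}\sqcup V^{-}$ versus the paper's terser ``calculating $D^{-1}Af(a)$ yields the result'').
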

\begin{proof}
Fix $a\in V$. Suppose that $\mu$ is a simple eigenvalue of $D^{-1}A$ with $f$ being its eigenfunction. Assume further that $f(a)=-1$ or $1$. For every $g \in \mathrm{Aut}(G)$, the function $f_g$ defined as $f_g(\cdot)=f(g(\cdot))$ is still an eigenfunction of the same eigenvalue $\mu$. Since $\mu$ is simple, there exits $\lambda_g$ such that $f_g=\lambda_g f$. Since $\Vert f\Vert_2=\Vert f_g\Vert_2$, we have $\lambda_g \in \{-1,1\}$. From the fact that $G$ is vertex transitive, $\mathrm{Aut}(G)$ acts transitively on $G$, we have for any $u\in V$ that $f(u)\in \{-1,1\}$. Calculating $D^{-1}Af(a)$ yields the result.
\end{proof}

Combining Corollary \ref{15} and the above Theorem \ref{thm:vertextransitive}, we prove Corollary \ref{26}.

For graphs with no simple eigenvalues, we can get rid of the term $2/d$ in (\ref{22}) and (\ref{23}).

\begin{corollary}\label{16}
    Let $(G,S)$ be a finite connected Cayley graph with a finite group $G$ and a gernerating set $S$. Assume that $G$ is a simple group or the size $|G|$ is odd. Let $$-1< \mu_{n} \leq \mu_{n-1} \leq ... \leq \mu_{2} \leq \mu_{1}=1$$ be the eigenvalues of its normalized adjacency matrix where $n=|G|$. Denote by $h$ its edge-expansion. Then we have
\begin{equation}\label{7B}
    1+\mu_n \geq \frac{(\sqrt{3}-1)^2}{8}h^2
\end{equation}
and
\begin{equation}
        1+\mu_{n}\geq  2\left(\sqrt{1+\frac{1-\mu_2}{4}}-1\right)^2.
    \end{equation}
\end{corollary}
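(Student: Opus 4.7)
The plan is to show that, under either hypothesis on $G$, the smallest eigenvalue $\mu_n$ of $D^{-1}A$ cannot be simple; once this is established we have $\mu_n=\mu_{n-1}$, and both claimed inequalities follow immediately by substituting this equality into the two estimates of Corollary \ref{15}. The mechanism for ruling out a simple $\mu_n$ is to sharpen the argument behind Theorem \ref{thm:vertextransitive} using the extra structure of a Cayley graph: the left-regular action of $G$ already sits inside $\mathrm{Aut}(G,S)$, so simplicity of $\mu_n$ will actually produce a genuine group homomorphism $\lambda:G\to\{\pm 1\}$, and either hypothesis on $G$ will then force this homomorphism to be trivial.

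Concretely, I would suppose for contradiction that $\mu_n$ is simple, with real eigenfunction $f$. After rescaling so that $|f(e)|=1$ at the identity $e\in G$, the argument in the proof of Theorem \ref{thm:vertextransitive} shows that $f$ takes values in $\{-1,+1\}$ at every vertex. For each $a\in G$, left-multiplication by $a$ is an automorphism of the Cayley graph, so $f_a(x):=f(ax)$ is again an eigenfunction of $\mu_n$; simplicity forces $f_a=\lambda_a f$ for some $\lambda_a\in\{\pm 1\}$, and the short identity $f_{ab}(x)=f(abx)=f_a(bx)=\lambda_a\lambda_b f(x)$ yields $\lambda_{ab}=\lambda_a\lambda_b$. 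Thus $a\mapsto \lambda_a$ is a group homomorphism $\lambda:G\to\{\pm 1\}\cong \mathbb{Z}/2\mathbb{Z}$.

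The remaining step, and the part I expect to require the most care, is to argue that such a homomorphism $\lambda$ must be trivial under our hypotheses. If $|G|$ is odd this is immediate from Lagrange's theorem, since the image of $\lambda$ has order dividing $\gcd(|G|,2)=1$. If $G$ is simple, then $\ker\lambda$ is a normal subgroup of $G$, forcing $\lambda$ to be either trivial or injective; the injective alternative would give $G\cong \mathbb{Z}/2\mathbb{Z}$, whose only connected Cayley graph is $K_2$, which is bipartite and hence violates the standing hypothesis $\mu_n>-1$. Therefore $\lambda\equiv 1$, so $f$ is invariant under left-translation and hence constant, which forces $\mu_n=1$, contradicting $\mu_n\leq \mu_2<1$ for a connected graph on more than one vertex. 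Once simplicity of $\mu_n$ has been ruled out, the two displayed bounds follow by applying Corollary \ref{15} to $\mu_{n-1}=\mu_n$; the main technical subtlety to watch is precisely the degenerate $\mathbb{Z}/2\mathbb{Z}$ case, which is cleanly excluded by the hypothesis $\mu_n>-1$.
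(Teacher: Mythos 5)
Your proof is correct and follows essentially the same route as the paper: simplicity of $\mu_n$ produces, via the left-regular action, a homomorphism $\lambda\colon G\to\{\pm 1\}$ that must be trivial under either hypothesis, forcing the eigenfunction to be constant; hence every nontrivial eigenvalue has multiplicity greater than one, $\mu_n=\mu_{n-1}$, and Corollary \ref{15} gives both bounds. Your explicit handling of the degenerate case $G\cong\mathbb{Z}/2\mathbb{Z}$ (excluded because its Cayley graph $K_2$ is bipartite, contradicting $\mu_n>-1$) is in fact slightly more careful than the paper's one-line appeal to the nonexistence of an index-two subgroup.
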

\begin{proof}
    The map $\lambda_g: G \rightarrow \{\pm 1\}$ in the proof of Theorem \ref{thm:vertextransitive} is in fact a homomorphism of groups since for any $g_1,g_2 \in G$, it holds that \[\lambda_{g_1g_2}f(e)=f_{g_1g_2}(e)=f(g_1g_2)=f_{g_1}(g_2)=\lambda_{g_1}f(g_2)=\lambda_{g_1}\lambda_{g_2}f(e).\]
    Here we denote by $e$ the identity element of the group $G$. If $G$ is a simple group or the size $|G|$ is odd, then such a homomorphism must be trivial otherwise there will be a subgroup of $G$ of index 2. As a result all non-trivial eigenvalues of $(G,S)$ will be of multiplicity great than 1 and the above estimates hold.
\end{proof}

\section*{Acknowledgement}
SL is very grateful to Paul Horn and Matthias Keller for very inspiring discussions on related topics.
This work is supported by the National Key R and D Program of China 2020YFA0713100, the National Natural Science Foundation of China (No. 12031017), and Innovation Program for Quantum Science and Technology 2021ZD0302902.

\bibliographystyle{abbrv}

\end{document}